\newtheorem{theorem}{Theorem}
\newtheorem{corollary}[theorem]{Corollary}
\theoremstyle{remark}
\DeclareMathOperator{\trace}{trace}
\DeclareMathOperator{\diag}{diag}
\DeclareMathOperator{\rank}{rank}
\newcommand{\C}{\mathbb C}
\newcommand{\R}{\mathbb R}
\newcommand{\F}{\mathbb F}
\renewcommand{\ge}{\geqslant}
\begin{document}
\title{Each $n$-by-$n$ matrix with $n>1$ is a sum of 5 coninvolutory matrices}

\author[aba]{Ma. Nerissa M. Abara}
\ead{issa@up.edu.ph}
\address[aba]{Institute of Mathematics, University of the Philippines, Diliman, Quezon City 1101,
Philippines.}

\author[mer]{Dennis I. Merino\corref{cor}}
\ead{dmerino@selu.edu}
\address[aba]{Department of Mathematics, Southeastern Louisiana University, Hammond, LA
70402-0687, USA.}

\author[ser]{Vyacheslav I. Rabanovich}
\ead{rvislavik@gmail.com}

    \author[ser]{Vladimir V.
    Sergeichuk}
\ead{sergeich@imath.kiev.ua}
\address[ser]{Institute of Mathematics,
Tereshchenkivska 3,
Kiev, Ukraine}

\author[aba]{John Patrick Sta. Maria}
\ead{jpbstamaria@gmail.com}
\cortext[cor]{Corresponding author}

\begin{abstract}
An $n\times n$ complex matrix $A$ is called \emph{coninvolutory} if $\bar AA=I_n$ and \emph{skew-coninvolutory} if $\bar AA=-I_n$ (which implies that $n$ is even). We prove that each matrix of size $n\times n$ with $n>1$  is a sum of 5 coninvolutory matrices and each matrix of size $2m\times 2m$ is a sum of 5 skew-coninvolutory matrices.

We also prove that each square complex matrix is a sum of a coninvolutory matrix and  a condiagonalizable matrix. A matrix $M$ is called \emph{condiagonalizable} if $M=\bar S^{-1}DS$ in which $S$ is nonsingular and $D$ is diagonal.
\end{abstract}

\begin{keyword}
Coninvolutory matrices\sep Skew-coninvolutory matrices\sep Condiagonalizable matrices

\MSC 15A21\sep 15A23
\end{keyword}

 \maketitle

\section{Introduction}

An $n\times n$ complex matrix $A$ is called \emph{coninvolutory} if $\bar AA=I_n$ and \emph{skew-coninvolutory} if $\bar AA=-I_n$ (and so $n$ is even since $\det(\bar AA)\ge 0$). We prove that each matrix of size $n\times n$ with $n\ge 2$  is a sum of 5 coninvolutory matrices and each matrix of size $2m\times 2m$ is a sum of 5 skew-coninvolutory matrices.

These results are somewhat unexpected since the set of matrices that are sums of involutory matrices is very restricted. Indeed, if $A^2=I_n$ and $J$ is the Jordan form of $A$, then $J^2=I_n$,  $J=\diag(1,\dots,1,-1,\dots,-1)$, and so $\trace(A)=\trace(J)$ is an integer.
Thus, if a matrix is a sum of involutory matrices, then its trace is an integer. Wu  \cite[Corollary 3]{wu} and Spiegel \cite[Theorem 5]{Spiegel} prove that an $n\times n$ matrix can be decomposed into a sum of involutory matrices if and only if its trace is an integer being even if $n$ is even.

We also prove that each square complex matrix is a sum of a coninvolutory matrix and a condiagonalizable matrix. A matrix is \emph{condiagonalizable} if it can be written in the form $\bar S^{-1}DS$ in which $S$ is nonsingular and $D$ is diagonal; the set of condiagonalizable matrices is described in \cite[Theorem 4.6.11]{H-J}.

Similar problems are discussed in Wu's survey \cite{wu1}. Wu \cite{wu1} shows that each matrix is a sum of unitary matrices and discusses the number of summands (see also \cite{mer}).
Wu \cite{wu} establishes that $M$ is a sum of idempotent matrices if and only if $\trace(M)$ is an integer and $\trace(M)\ge \rank(M)$.
Rabanovich \cite{rab} proves that every square complex matrix is a linear combination of three idempotent matrices.  Abara, Merino, and Paras \cite{aba} study
coninvolutory and skew-coninvolutory matrices.

\section{Each matrix is a sum of a coninvolutory matrix and a condiagonalizable matrix}

Two matrices $A$ and $B$ over a field $\F$ are \emph{similar} (or, more accurately, \emph{$\F$-similar}) if there exists a nonsingular matrix $S$ over $\F$ such that $S^{-1}AS=B$. A matrix $A$ is \emph{diagonalizable} if it is similar to a diagonal matrix.
Two complex matrices $A$ and $B$ are \emph{consimilar} if there exists a nonsingular matrix $S$ such that $\bar S^{-1}AS=B$; a canonical form under consimilarity is given in \cite[Theorem 4.6.12]{H-J}.
A complex matrix $A$ is \emph{real-condiagonalizable} if it is consimilar to a diagonal real matrix.

By the statement (b) of the following theorem, each square complex matrix is a sum of two condiagonalizable matrices, one of which may be taken to be coninvolutory.

\begin{theorem}\label{t1}
\begin{itemize}
  \item[\rm(a)]
Each square matrix over an infinite field is a sum of an involutory matrix and a diagonalizable matrix.

  \item[\rm(b)]
Each square complex matrix is a sum of a coninvolutory matrix and a real-condiagonalizable matrix.

  \item[\rm(c)]
Each square complex matrix is consimilar to $I_n+D$, in which $D$ is a real-condiagonalizable matrix.

  \item[\rm(d)]
Each square complex matrix is consimilar to $C+D$, in which $C$ is coninvolutory and $D$ is a diagonal real matrix.
\end{itemize}
\end{theorem}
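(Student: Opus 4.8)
The plan is to prove (a) first and then bootstrap the three complex statements from it. For (a), since the classes of involutory and of diagonalizable matrices are both invariant under $\F$-similarity and closed under direct sums, I would use the rational canonical form to reduce to the case where $A=C$ is a single companion matrix. Choose distinct scalars $\lambda_1,\dots,\lambda_n\in\F$ (available because $\F$ is infinite) whose sum equals $\trace C-\varepsilon$, where $\varepsilon$ is a feasible value of $\trace E$ for an involution $E$ (namely $0$ for even $n$ and $\pm1$ for odd $n$), and set $D=\diag(\lambda_1,\dots,\lambda_n)$. I would then produce an involution $E$ with $\trace E=\varepsilon$ such that $E+D$ is nonderogatory with the same characteristic polynomial as $C$; then $E+D$ is similar to $C$, say $C=S(E+D)S^{-1}=SES^{-1}+SDS^{-1}$, which exhibits $C$ as an involutory matrix plus a matrix similar to $D$, hence diagonalizable with spectrum $\{\lambda_i\}\subseteq\F$.

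The heart of (a) is thus an inverse--characteristic--polynomial problem: realize the prescribed polynomial $\det(xI-C)$ as $\det\bigl(xI-(E+D)\bigr)$ by a suitable choice of involution $E$. Writing $E=I-2P$ with $P$ an idempotent of rank $\lfloor n/2\rfloor$ supplies on the order of $n^2/2$ free parameters, far more than the $n-1$ coefficients that remain once the trace is pinned down, so a dimension count makes the matching plausible while generic choices keep $E+D$ nonderogatory. The genuine difficulty --- and the reason infinitude of $\F$ is needed --- is \emph{$\F$-solvability}: over a non-closed field one must arrange that the parameters defining $E$ lie in $\F$, equivalently that the (already $\F$-split, by our choice of the $\lambda_i$) spectrum is attained by a genuinely $\F$-rational involution. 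I expect this solvability step to be the main obstacle, and to require the explicit structure of the companion matrix rather than a soft genericity argument.

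For the complex statements I would first reduce to a real matrix. By the canonical form under consimilarity \cite[Theorem 4.6.12]{H-J} the canonical blocks may be chosen real, so every complex $A$ is consimilar to a real matrix $B$; since consimilarity is transitive it suffices to prove (b)--(d) for $B$. Applying (a) over $\F=\R$ gives $B=E+M$ with $E$ a real involution and $M=P^{-1}\Lambda P$ real-diagonalizable ($P,\Lambda$ real, $\Lambda$ diagonal). A real involution satisfies $\bar EE=E^2=I$, so it is coninvolutory and therefore $E=\bar T^{-1}T$ for some nonsingular $T$; conjugating $B$ consimilarly by $T^{-1}$ turns $E$ into $I_n$ and yields $B\sim_{\mathrm{con}}I_n+\bar TMT^{-1}$, where $\bar TMT^{-1}$ is consimilar to $M$ and hence real-condiagonalizable --- this is (c). Alternatively, conjugating $B$ by the real matrix $P$ gives $B\sim PEP^{-1}+\Lambda$, a coninvolutory matrix plus a diagonal real matrix, which is (d).

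Finally (b) follows from (c): if $A=\bar S^{-1}(I_n+D)S$ with $D$ real-condiagonalizable, then $A=\bar S^{-1}S+\bar S^{-1}DS$, where $\bar S^{-1}S$ is coninvolutory and $\bar S^{-1}DS$ is consimilar to $D$, hence still real-condiagonalizable. Thus all four parts rest on (a) together with two consimilarity facts --- that a real involution is consimilar to $I_n$ and that every complex matrix is consimilar to a real matrix. The only substantial work is the companion-matrix construction in (a), and within it the $\F$-rationality of the involution is the step I expect to be hardest.
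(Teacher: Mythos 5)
Your treatment of (b)--(d) is correct and is essentially the paper's: reduce to a real matrix via consimilarity, apply (a) over $\R$, and use that a real involution is coninvolutory and that every coninvolutory matrix has the form $\bar S^{-1}S$. The problem is (a), which is the heart of the theorem, and there your proposal has a genuine hole: the step ``produce an involution $E$ with $\trace E=\varepsilon$ such that $E+D$ is nonderogatory with the same characteristic polynomial as $C$'' is precisely what needs to be proved, and you offer only a parameter count plus the admission that the $\F$-rational solvability of this inverse characteristic-polynomial problem is ``the main obstacle.'' A dimension count over a non-closed field proves nothing here (the fibers of the map from involutions to characteristic polynomials could miss the target point over $\F$ even when dimensions match), so as written the proof of (a) does not exist. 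There is also a secondary defect: your parametrization $E=I-2P$ with $P$ idempotent of rank $\lfloor n/2\rfloor$ collapses in characteristic $2$ (where $E=I$), yet infinite fields of characteristic $2$ such as $\mathbb F_2(t)$ are covered by the statement.

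The paper closes exactly this gap by reversing the roles of the two summands, which turns the hard inverse problem into a one-line computation. Instead of fixing the diagonal matrix $D$ and solving for an involution, fix the parametric family of involutions
\[
G=\begin{bmatrix}
1&&0&b_m\\&\ddots&&\vdots\\
&&1&b_{2}\\0&&&-1
\end{bmatrix},
\]
and observe that for a companion matrix $F(f)$ the difference $F(f)-G+I_m$ is \emph{again a companion matrix}, with trace pinned at $a_1+2$ but with all remaining characteristic coefficients $c_2,\dots,c_m$ running over all of $\F$ as $b_2,\dots,b_m$ vary. Choosing pairwise distinct $\lambda_1,\dots,\lambda_m\in\F$ with $\lambda_1+\dots+\lambda_m=a_1+2$ (possible since $\F$ is infinite, in any characteristic) and matching coefficients makes $F(f)-G+I_m$ similar to $\diag(\lambda_1,\dots,\lambda_m)$, hence $F(f)-G$ is diagonalizable and $F(f)=G+(F(f)-G)$ is the desired decomposition. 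Note also that the paper's $G$ has trace $m-2$ rather than your balanced $0$ or $\pm1$; no particular value of the involution's trace is needed, since it only shifts the required sum of the $\lambda_i$. If you want to salvage your own scheme, you would have to prove the inverse eigenvalue statement you postponed, and nothing in your sketch indicates how.
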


\begin{proof}
The theorem is trivial for $1\times 1$ matrices.

Let $\F$ be any field.
The \emph{companion matrix of a polynomial}
\[
f(x)=x^m-a_1x^{m-1}-\dots-a_m\in\F[x]
\]
is the matrix
\begin{equation}\label{kjw}
F(f):=
\begin{bmatrix}
0&&0&a_m\\1&\ddots&&\vdots\\
&\ddots&0&a_{2}\\0&&1&a_1
\end{bmatrix}\in\F^{m\times m};
\end{equation}
its characteristic polynomial is $f(x)$. By
\cite[Section 12.5]{van},
 \begin{equation}\label{mlr1}
\parbox[c]{0.7\textwidth}{each $A\in\F^{n\times n}$ is $\F$-similar to a direct sum of companion matrices whose characteristic polynomials are powers of prime polynomials; this direct sum is uniquely determined by $A$, up to permutations of summands.}
\end{equation}
 Moreover,
 \begin{equation}\label{mlr}
\parbox[c]{0.7\textwidth}{if $f,g\in\F[x]$ are relatively prime, \\then $F(f)\oplus F(g)$ is $\F$-similar to $F(fg)$.}
\end{equation}
\medskip

(a) Let  $A$ be a matrix of size $n\times n$ with $n\ge 1$
over an infinite field $\F$. It is similar to a direct sum of companion matrices:
\[
SAS^{-1}=B=F_1\oplus\dots\oplus F_t,\qquad S\text{ is nonsingular}.
\]
If $B=C+D$ is the sum of an involutory matrix $C$ and a diagonalizable matrix $D$, then $A=S^{-1}CS+S^{-1}DS$ is also the sum of an involutory matrix and a diagonalizable matrix. Thus, it suffices to prove the statement (a) for $B$. Moreover, it suffices to prove it for an arbitrary companion matrix \eqref{kjw}.

Each matrix
\[
G=\begin{bmatrix}
1&&0&b_m\\&\ddots&&\vdots\\
&&1&b_{2}\\0&&&-1
\end{bmatrix}\in\F^{m\times m}
\]
is involutory. Changing $b_2,\dots,b_{m}$, we get
\[
F(f)-G+I_m=
\begin{bmatrix}
0&&0&c_m\\1&\ddots&&\vdots\\
&\ddots&0&c_{2}\\0&&1&a_1+2
\end{bmatrix}
\]
with arbitrary $c_2,\dots,c_{m}\in\mathbb F$. For each pairwise unequal $\lambda _1,\dots,\lambda _m\in\mathbb F$ such that $\lambda _1+\dots+\lambda _m=a_{1}+2=\text{trace}(F(f)-G+I_m)$, we can take $G$ such that the characteristic polynomial of $F(f)-G+I_m$ is equal to
\[
x^m-(a_1+2)x^{m-1}-c_{2}x^{m-2}-\dots-c_m  =(x-\lambda_1)\cdots(x-\lambda_m).
\]
Thus,
\begin{equation}\label{mry}
\parbox[c]{0.7\textwidth}{$F(f)-G+I_m$ is $\F$-similar to $\diag(\lambda _1,\dots,\lambda _m)$,}
\end{equation}
and so the matrix $F(f)-G$ is diagonalizable.
\medskip

(b)
Let us prove the statement (b) for $A\in \C^{n\times n}$ with $n>1$. By \cite[Corollary 4.6.15]{H-J},
\begin{equation}\label{jdr}
\text{each square complex matrix is consimilar to a real matrix,}
\end{equation}
hence $A=\bar S^{-1}BS$ for some $B\in\R^{n\times n}$ and nonsingular $S\in\C^{n\times n}$. By the statement (a),  $B=C+D$, in which $C\in\R^{n\times n}$ is involutory and $D\in\R^{n\times n}$ is real-diagonalizable. Then $D=R^{-1}ER$, in which $R\in\R^{n\times n}$ is nonsingular and $E\in\R^{n\times n}$ is diagonal. Thus,
$A=\bar S^{-1}CS+\overline{(RS)}^{-1}E(RS)$ is a sum of a coninvolutory matrix and a real-condiagonalizable matrix.

(c) Let $A\in \C^{n\times n}$ with $n>1$. By (b), $A=C+D$, in which $C$ is coninvolutory and $D$ is real-condiagonalizable. By  \cite[Lemma 4.6.9]{H-J}, $C$ is coninvolutory if and only if there exists a nonsingular $S$ such that $C=\bar S^{-1}S$ (that is, $C$ is consimilar to the identity). Then $\bar SAS^{-1}=I_n+\bar SDS^{-1}$, in which $\bar SDS^{-1}$ is real-condiagonalizable.

(d) This statement follows from (b).
\end{proof}

\begin{corollary}\label{rem1}
Each $m\times m$  companion matrix \eqref{kjw} with $m\ge 2$ is $\F$-similar to $G+\diag(\mu _1,\dots,\mu_m)$, in which $G$ is involutory and $\mu _1,\dots,\mu_m\in\F$ are arbitrary pairwise unequal numbers such that $\mu _1+\dots+\mu_m= a_1+2-m$.
\end{corollary}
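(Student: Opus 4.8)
The plan is to read the corollary directly off the construction used to prove statement (a), namely \eqref{mry}, by translating the target eigenvalues and then conjugating the decomposition $F(f)=G+(F(f)-G)$ into the desired shape. No new computation beyond part (a) should be needed.

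First I would fix arbitrary pairwise unequal $\mu_1,\dots,\mu_m\in\F$ with $\mu_1+\dots+\mu_m=a_1+2-m$ and set $\lambda_i:=\mu_i+1$. Then $\lambda_1,\dots,\lambda_m$ are again pairwise unequal and satisfy $\lambda_1+\dots+\lambda_m=a_1+2=\trace(F(f)-G+I_m)$, so the hypotheses required by \eqref{mry} hold. Applying the construction in (a), I obtain an involutory $G$ (of the displayed form, with suitable last-column entries $b_2,\dots,b_m$) such that $F(f)-G+I_m$ is $\F$-similar to $\diag(\lambda_1,\dots,\lambda_m)$. Subtracting $I_m$ shows that $F(f)-G$ is $\F$-similar to $\diag(\lambda_1-1,\dots,\lambda_m-1)=\diag(\mu_1,\dots,\mu_m)$, so I may write $T(F(f)-G)T^{-1}=\diag(\mu_1,\dots,\mu_m)$ for some nonsingular $T$.

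Conjugating the identity $F(f)=G+(F(f)-G)$ by $T$ then gives
\[
TF(f)T^{-1}=TGT^{-1}+\diag(\mu_1,\dots,\mu_m).
\]
Putting $G':=TGT^{-1}$, which is involutory since $(G')^2=TG^2T^{-1}=I_m$, exhibits $F(f)$ as $\F$-similar to $G'+\diag(\mu_1,\dots,\mu_m)$, as claimed.

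I do not expect a genuine obstacle here, since the corollary is essentially a restatement of \eqref{mry} after the shift $\lambda_i=\mu_i+1$ and one conjugation. The only things to verify are bookkeeping: that the shift preserves pairwise inequality and adjusts the trace from $a_1+2$ to the sum condition $\sum\mu_i=a_1+2-m$, and that conjugation by $T$ simultaneously carries the involutory $G$ to an involutory $G'$ and diagonalizes $F(f)-G$. Both are immediate, so the argument is short.
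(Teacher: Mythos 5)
Your proposal is correct and is essentially the paper's own argument: the paper derives the corollary from \eqref{mry} precisely by the shift $\diag(\mu_1,\dots,\mu_m):=\diag(\lambda_1,\dots,\lambda_m)-I$, which is your substitution $\lambda_i=\mu_i+1$. Your write-up just makes explicit the bookkeeping (trace adjustment, and that conjugation carries the involutory $G$ to an involutory $G'$) that the paper leaves implicit.
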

We get this corollary from \eqref{mry} by taking $\diag(\mu_1,\dots,\mu_m  ):=\diag(\lambda _1,\dots,\lambda _m)-I$.

\section{Each $n\times n$ matrix with $n>1$ is a sum of 5 coninvolutory matrices}

\begin{theorem}\label{t2}
Each $n\times n$ complex matrix with $n\ge 2$ is a sum of 4  coninvolutory matrices if $n=2$ and 5  coninvolutory matrices if $n\ge 2$.
\end{theorem}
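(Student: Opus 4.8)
The plan is to reduce the assertion to a statement about real diagonal matrices and then to settle small blocks by an explicit construction. First I would record the elementary fact that consimilarity preserves coninvolution: if $C$ is coninvolutory and $T$ is nonsingular, then $\bar T^{-1}CT$ is again coninvolutory, since
\[
\overline{\bar T^{-1}CT}\,\bar T^{-1}CT=T^{-1}\bar C\bar T\,\bar T^{-1}CT=T^{-1}\bar C C\,T=I.
\]
Because $X\mapsto\bar T^{-1}XT$ is additive, a matrix is a sum of $k$ coninvolutory matrices if and only if each matrix consimilar to it is. By Theorem~\ref{t1}(d), $A$ is consimilar to $C+D$ with $C$ coninvolutory and $D$ a diagonal real matrix; as $C$ already accounts for one summand, it suffices to prove that a real diagonal matrix of size $n\ge2$ is a sum of $4$ coninvolutory matrices, and of $3$ such matrices when $n=2$.

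Next I would dispose of the one immediate case. If $D=\diag(d_1,\dots,d_n)$ has all $|d_j|\le2$, then
\[
D=C_0+\overline{C_0},\qquad C_0:=\tfrac12 D+i\,\diag\!\big(\sqrt{1-d_1^2/4},\dots,\sqrt{1-d_n^2/4}\big),
\]
and $C_0$ is coninvolutory because $\operatorname{Re}C_0$ and $\operatorname{Im}C_0$ are real, commute (both are diagonal), and satisfy $(\operatorname{Re}C_0)^2+(\operatorname{Im}C_0)^2=I_n$. Thus such a $D$ is already a sum of $2$ coninvolutory matrices, and the whole difficulty is that the $d_j$ may be large and pairwise distinct.

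Here I expect the main obstacle. The naive two‑summand device fails: for coninvolutory $C$ one has $\overline C=C^{-1}$, so $C+\overline C=C+C^{-1}$ is real with eigenvalues $\lambda+\lambda^{-1}$, where $\lambda$ runs over the spectrum of $C$. That spectrum is symmetric under $\lambda\mapsto1/\overline\lambda$, so $\lambda+\lambda^{-1}$ is real only when $|\lambda|=1$, giving a value in $[-2,2]$, or when $\lambda$ is real, in which case the reciprocal partner $1/\lambda$ contributes the \emph{same} value. Hence a conjugate pair $C+\overline C$ can equal a real diagonal matrix only with entries in $[-2,2]$ or with a repeated large entry — never with two distinct large entries. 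The way around this is to use genuinely non‑conjugate complex summands and to let large quantities sit in off‑diagonal positions, which coninvolutory matrices tolerate freely; for instance $\left[\begin{smallmatrix}i&t\\0&i\end{smallmatrix}\right]$ is coninvolutory for every $t$, with real part $\left[\begin{smallmatrix}0&t\\0&0\end{smallmatrix}\right]$. Concretely I would block‑diagonalize $D$ into $2\times2$ blocks, leaving a single $3\times3$ block when $n$ is odd, and prove by an explicit parametrized family that each block $\diag(d_i,d_j)$ is a sum of a fixed number of coninvolutory matrices whose off‑diagonal entries grow with $|d_i|,|d_j|$ but cancel in the total. The heart of the argument, and the step I expect to be genuinely computational, is exactly this realization of an arbitrary diagonal $2\times2$ block — in particular the case of distinct, arbitrarily large entries — as a short sum of coninvolutory matrices.

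Finally I would reassemble the pieces: a block‑diagonal matrix each of whose blocks is a sum of $k$ coninvolutory matrices is itself a sum of $k$ coninvolutory block‑diagonal matrices, and summand counts across blocks can be equalized by appending pairs $C,-C$, which are both coninvolutory and sum to zero. Tracking the counts should give $3$ coninvolutory summands for a $2\times2$ diagonal block and at most $4$ for the assembled real diagonal matrix of arbitrary size $n\ge2$ (the extra summand absorbing the odd‑dimensional leftover); restoring the coninvolutory matrix $C$ from the reduction then yields the claimed totals of $4$ when $n=2$ and $5$ in general.
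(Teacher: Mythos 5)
Your opening reduction (consimilarity preserves coninvolutory summands, then Theorem~\ref{t1}(d)) is exactly the paper's strategy for \emph{even} $n$, but the proposal never delivers the step you yourself call the heart of the argument, and the two places where you defer it are precisely where deferral is not possible. First, no coninvolutory decomposition of any diagonal block is actually exhibited; the paper's proof consists largely of these identities (e.g.\ \eqref{ssd2} and its companions), so as written nothing has been proved. Second, your count for $n=2$ does not come out. After spending one summand on the coninvolutory $C$ from Theorem~\ref{t1}(d), you need every real diagonal $2\times2$ matrix to be a sum of \emph{three} coninvolutory matrices. That is strictly stronger than anything in the paper: its identities give four summands for $\diag(a,b)$ (split into $\diag(c,-c)$ plus a scalar part, each a sum of two), and the paper deliberately sidesteps the issue by proving the $n=2$ case directly from the real canonical form \eqref{ndj} rather than from Theorem~\ref{t1}(d). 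A three-summand decomposition of $\diag(a,b)$ can in fact be built, but it needs a genuinely different construction (three summands whose real parts are $\frac{a+b}{2}I$ and two trace-zero matrices with small scalar squares, with imaginary parts arranged to cancel); your proposal contains no such construction, and without it your method yields $5$ summands for $n=2$, not the claimed $4$.

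The more serious gap is the odd case. Your plan leaves ``a single $3\times3$ block'' of the diagonal matrix and asserts that an extra summand absorbs it, but a $3\times3$ real diagonal block with three arbitrary large entries is exactly the hard case: $1\times1$ coninvolutory matrices are unimodular scalars, and in odd size no coninvolutory matrix has scalar real part $cI$ with $|c|>1$ (its imaginary part would satisfy $Y^2=(1-c^2)I$, impossible because $\det(Y)^2\ge0$ while $(1-c^2)^n<0$ for odd $n$), so the even-size devices such as \eqref{ssd2} have no analogue and no argument is offered. The paper never faces this problem because it does \emph{not} reduce the odd case to an arbitrary diagonal matrix: it uses the rational canonical form \eqref{mlr1} together with Corollary~\ref{rem1}, whose freedom in choosing the distinct values $\mu_1,\dots,\mu_{m_1}$ with prescribed sum lets it force the leftover $1\times1$ entry to be $0$ or $2$, i.e.\ a sum of four numbers $\pm1$ that can be matched with the four coninvolutory summands of the even part. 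Finally, your equalization device fails on parity: appending pairs $C,-C$ changes the number of summands by $2$, so it cannot reconcile a block written with $3$ summands against one written with $4$; to pad by one you would need the additional observation (absent from the proposal, though true) that every coninvolutory matrix is itself a sum of two coninvolutory matrices, being consimilar to $I$, which is a sum of two by \eqref{ssd2}.
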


\begin{proof}
Let us prove the theorem for  $M\in\C^{n\times n}$. By \eqref{jdr}, $M=\bar S^{-1}AS$ for some $A\in\R^{n\times n}$ and a nonsingular $S$. If $A=C_1+\dots+C_k$ is a sum of coninvolutory matrices, then $M=\bar S^{-1}C_1S+\dots+\bar S^{-1}C_kS$ is also a sum of coninvolutory matrices.

Thus, it suffices to prove  Theorem \ref{t2} for $A\in\R^{n\times n}$.
\medskip

\emph{Case 1: $n=2$.} By \cite[Theorem 3.4.1.5]{H-J},
each  $2\times 2$ real matrix is $\R$-similar to one of the matrices
\begin{equation}\label{ndj}
\begin{bmatrix}
  a & 0 \\
  0 & b \\
\end{bmatrix},\quad
\begin{bmatrix}
  a & 1 \\
  0 & a \\
\end{bmatrix},\quad
\begin{bmatrix}
  a & b \\
  -b & a \\
\end{bmatrix}\ (b>0),\qquad a,b\in\R.
\end{equation}

(i) The first matrix is a sum of 4 coninvolutory matrices since it is represented in the form
\[
\begin{bmatrix}
       a & 0 \\
       0 & b \\
     \end{bmatrix}
=\begin{bmatrix}
       (a-b)/2 & 0 \\
       0 & -(a-b)/2 \\
     \end{bmatrix}+\begin{bmatrix}
       (a+b)/2 & 0 \\
       0 & (a+b)/2 \\
     \end{bmatrix}
\]
and each summand is a sum of two coninvolutory matrices because
\[
\begin{bmatrix}
       2c & 0 \\
       0 & -2c \\
     \end{bmatrix}
=\begin{bmatrix}
       c & 1 \\
       (1-c^2) & -c \\
     \end{bmatrix}+\begin{bmatrix}
       c & -1 \\
       -(1-c^2) & -c \\
     \end{bmatrix}
\] and
\begin{equation}\label{ssd2}
\begin{bmatrix}
       2c & 0 \\
       0 & 2c \\
     \end{bmatrix}
=\begin{bmatrix}
       c & i \\
       (1-c^2)i & c \\
     \end{bmatrix}+\begin{bmatrix}
       c & -i \\
       -(1-c^2)i & c \\
     \end{bmatrix}
\end{equation}
are sums of two coninvolutory matrices for all $c\in\R$.

(ii) The second matrix is a sum of 4 coninvolutory matrices since
\[
\begin{bmatrix}
       a & 1 \\
       0 & a \\
     \end{bmatrix}
=\begin{bmatrix}
       a & 0 \\
       0 & a \\
     \end{bmatrix}+\begin{bmatrix}
       0 & 1 \\
       0 & 0 \\
     \end{bmatrix}
\]
and each summand is a sum of two coninvolutory matrices: the first due to \eqref{ssd2} and the second due to
\[
\begin{bmatrix}
       0 & 1 \\
       0 & 0 \\
     \end{bmatrix}
=\begin{bmatrix}
       1 & 1 \\
       0 & -1 \\
     \end{bmatrix}+\begin{bmatrix}
       -1 & 0 \\
       0 & 1 \\
     \end{bmatrix}.
\]

(iii)
The third matrix is a sum of 4 coninvolutory matrices since
\[
\begin{bmatrix}
       a & b \\
       -b & a \\
     \end{bmatrix}
=\begin{bmatrix}
       a & 0 \\
       0 & a \\
     \end{bmatrix}+\begin{bmatrix}
       0 & b \\
       -b & 0 \\
     \end{bmatrix}
\]
and each summand is a sum of two coninvolutory matrices due to \eqref{ssd2} and
\[
\begin{bmatrix}
       0 & b \\
       -b & 0 \\
     \end{bmatrix}
=\begin{bmatrix}
       1 & b \\
       0 & -1 \\
     \end{bmatrix}+\begin{bmatrix}
       -1 & 0 \\
       -b & 1 \\
     \end{bmatrix}.
\]
Thus, each $2\times 2$ matrix $A$ is a sum of 4 coninvolutory matrices. Applying this statement to $A-I_2$, we get that $A=I_2+(A-I_2)$ is also a sum of 5 coninvolutory matrices.
\medskip

\emph{Case 2: $n$ is even}.
By Theorem \ref{t1}(d),  $A$ is consimilar to $C+D$, where $C$ is coninvolutory and $D$ is a diagonal real matrix, which proves Theorem \ref{t2} in this case due to Case 1 since $D$ is a direct sum of $2\times 2$ matrices.
\medskip

\emph{Case 3: $n$ is odd}.
By \eqref{mlr1}, $A$ is $\R$-similar to a direct sum
\begin{equation}\label{feo}
B=F(f_1)\oplus\dots\oplus F(f_t),\qquad f_i(x)=x^{m_i}-a_{i1}x^{m_i-1}
-\dots-a_{im}\in\R[x].
\end{equation}

We can suppose that $m_1>1$. Indeed, if $m_i>1$ for some $i$, then we interchange $F(f_1)$ and $F(f_i)$. Let $m_1=\dots=m_t=1$ and let $a_{11}
\ne 0$ (if $B=0$, then $B=I+(-I)$ is the sum of involutory matrices). If $a_{11}=a_{21}$, then we replace $a_{11}$ by $-a_{11}$ using the consimilarity of $[a_{11}]$ and $[-a_{11}]$.
By \eqref{mlr}, $F(f_1)\oplus F(f_2)=[a_{11}]\oplus[a_{21}]$ is $\R$-similar to $F((x-a_{11})(x-a_{21}))$.

We obtain $B$ of the form $F(f_1)\oplus C$ with $m_1>1$.
By Corollary  \ref{rem1}, $F(f_1)$ is $\R$-similar to $G+\diag(\mu _1,\dots,\mu_{m_1})$, in which $G$ is a real involutory matrix and $\mu _1,\dots,\mu_{m_1}\in\R$ are arbitrary pairwise unequal numbers such that $\mu _1+\dots+\mu_{m_1}= a_{11}+2-{m_1}$.

We take $\mu _1=2$ (and then $\mu_2=-2$) if $f_1(x)=x^2-a_{12}$. We take  $\mu _1=0$ if $f_1(x)\ne x^2-a_{12}$.
Applying Theorem \ref{t1}(d) to the other direct summands $F(f_2),\dots, F(f_t)$, we find that $B$ is $\R$-similar to \[
\begin{bmatrix}
  G & 0\\
  0& C\\
\end{bmatrix}+
\begin{bmatrix}
  \mu_1 & 0\\
  0& D\\
\end{bmatrix},
\]
in which the first summand is coninvolutory and the second is a diagonal real matrix. By Case 1,
\[
D=C_1+C_2+C_3+C_4,
\]
in which $C_1,C_2,C_3,C_4$ are coninvolutory matrices. Then
\[
\begin{bmatrix}
  \mu_1 & 0 \\
  0 & D \\
\end{bmatrix}=
\begin{bmatrix}
  1 & 0 \\
  0 & C_1 \\
\end{bmatrix}
+
\begin{bmatrix}
  \mu_1-1 & 0 \\
  0 & C_2 \\
\end{bmatrix}
+
\begin{bmatrix}
  1 & 0 \\
  0 & C_3 \\
\end{bmatrix}
+\begin{bmatrix}
  -1 & 0 \\
  0 & C_4 \\
\end{bmatrix}
\]
is a sum of 4 coninvolutory matrices.
\end{proof}

\section{Each $2m\times 2m$ matrix is a sum of 5 skew-coninvolutory matrices}

We recall that an $n\times n$ complex matrix $A$ is called \emph{skew-coninvolutory} if $\bar AA=-I_n$ (and so $n$ is even since $\det(\bar AA)\ge 0$).

\begin{theorem}\label{ts}
Each $2m\times 2m$ complex matrix is a sum of at most 5  skew-coninvolutory matrices.
\end{theorem}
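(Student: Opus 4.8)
The plan is to follow the architecture of the proof of Theorem~\ref{t2}, replacing its coninvolutory building blocks by skew-coninvolutory ones. First I would reduce to the real case exactly as before: by \eqref{jdr} we may write $M=\bar S^{-1}AS$ with $A\in\R^{2m\times 2m}$, and since skew-coninvolution is preserved under consimilarity (if $\bar XX=-I_n$ then $\overline{\bar S^{-1}XS}\,(\bar S^{-1}XS)=S^{-1}\bar XXS=-I_n$) and sums are sent to sums, it suffices to treat real $A$. The two computational seeds are the skew analogues of \eqref{ssd2},
\[
\begin{bmatrix} 2c & 0 \\ 0 & 2c \end{bmatrix}
=\begin{bmatrix} c & i \\ -(1+c^2)i & c \end{bmatrix}
+\begin{bmatrix} c & -i \\ (1+c^2)i & c \end{bmatrix},
\]
\[
\begin{bmatrix} 2c & 0 \\ 0 & -2c \end{bmatrix}
=\begin{bmatrix} c & 1 \\ -(1+c^2) & -c \end{bmatrix}
+\begin{bmatrix} c & -1 \\ 1+c^2 & -c \end{bmatrix},
\]
in which every summand is skew-coninvolutory (a direct check gives $\bar KK=-I_2$; the last four summands are real with trace $0$ and determinant $1$, hence square to $-I_2$). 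Writing $\diag(a,b)=\diag(\tfrac{a+b}2,\tfrac{a+b}2)+\diag(\tfrac{a-b}2,-\tfrac{a-b}2)$ then shows that every $2\times2$ real diagonal matrix is a sum of $4$ skew-coninvolutory matrices, and by taking direct sums every real diagonal matrix of even size $2m$ is a sum of $4$ skew-coninvolutory matrices.

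With these in hand, the cleanest finish would be a skew analogue of Theorem~\ref{t1}(d): \emph{each $2m\times2m$ complex matrix is consimilar to $K+D$ with $K$ skew-coninvolutory and $D$ a real diagonal matrix.} Granting this, $A$ is consimilar to $K+D$, the single matrix $K$ contributes one summand and $D$ contributes four, for the desired total of $5$.

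The hard part is precisely this skew analogue, and it is genuinely harder than in the coninvolutory case. In Theorem~\ref{t2} the even case was immediate from Theorem~\ref{t1}(d) because a coninvolutory $C$ already counts as one summand; here $K$ must be skew-coninvolutory, and a coninvolutory (for example identity-like) piece cannot be traded for a skew-coninvolutory one while leaving a real-diagonal remainder. Indeed the consimilarity invariant $X\mapsto X\bar X$ has nonnegative spectrum on real diagonal matrices, whereas any $K_0$ with $K_0^2=-I$ has spectrum $\{\pm i\}$, so $I-K_0$ is never real-condiagonalizable; equivalently, the bare nilpotent $\begin{bmatrix}0&1\\0&0\end{bmatrix}$ is \emph{not} a sum of two skew-coninvolutory matrices (matching the $(1,1)$ entries forces $|\alpha|^2=-1$). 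Thus the even case here plays the role of the difficult Case~3 of Theorem~\ref{t2}, and the neat triangular involution $G$ of Theorem~\ref{t1}(a) has no skew counterpart, since a real matrix squaring to $-I$ is never triangular and exists only in even size.

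I would therefore resolve it through the rational canonical form \eqref{mlr1}: decompose $A$ into companion blocks, and---since a real square root of $-I$ requires even size---combine odd-sized blocks pairwise into even ones by means of \eqref{mlr}, exactly as odd summands are merged in Case~3. It then remains to prove a skew analogue of Corollary~\ref{rem1} for a single even companion block, namely to exhibit a skew-coninvolutory $K_0$ (for instance a real $K_0$ with $K_0^2=-I$) such that $F(f)-K_0$ is real-condiagonalizable with a prescribed set of pairwise distinct real eigenvalues, which can then be fed into the two seed identities above. Replacing the triangular construction of \eqref{mry} by a $2\times2$-block construction, and verifying that the spectrum of $F(f)-K_0$ can be prescribed while the total number of skew-coninvolutory summands stays at $5$, is the main obstacle.
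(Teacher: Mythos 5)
You get the framing right: the reduction to a real matrix via \eqref{jdr} is valid (consimilarity sends skew-coninvolutory matrices to skew-coninvolutory matrices and sums to sums), your two $2\times 2$ identities are correct and are essentially the paper's \eqref{bfl} and its real companion, and your conclusion that every real diagonal matrix of even size is a sum of $4$ skew-coninvolutory matrices is the paper's Case 1 verbatim. Your diagnosis of where the difficulty sits is also accurate. But that is exactly where the proposal stops being a proof: the statement everything hinges on --- a skew analogue of Theorem \ref{t1}(d), or of Corollary \ref{rem1} for even companion blocks (a real $K_0$ with $K_0^2=-I$ such that $F(f)-K_0$ is real-diagonalizable with prescribed distinct real eigenvalues) --- is never proved; you yourself call it ``the main obstacle.'' That lemma is the entire technical content of the theorem, and it is not a routine verification: the triangular involution behind \eqref{mry} has no skew counterpart (as you note), the spectrum of $F(f)-K_0$ can at best be prescribed subject to a trace constraint, and it is not even clear that your intermediate statement is true --- the paper never establishes it and instead works around it. A smaller unaddressed point: merging odd companion blocks by \eqref{mlr} needs coprime characteristic polynomials, so equal odd blocks must first be modified by a consimilarity, as the paper does for $1\times 1$ blocks in Case 3 of Theorem \ref{t2}; your sketch does not say how.

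The paper's proof avoids companion matrices altogether. It reduces to the consimilarity canonical form \cite[Theorem 4.6.12]{H-J}, a direct sum of blocks \eqref{mmu} and \eqref{ccd}. For a direct sum of blocks $J_n(\lambda)$ --- a real bidiagonal matrix with nonnegative diagonal --- it subtracts a direct sum $C$ of $2\times 2$ real skew-coninvolutory blocks carrying free parameters $c_1,\dots,c_m$; the remainder $D$ is block upper triangular with $2\times 2$ diagonal blocks $D_k$, and the $c_k$ are chosen one at a time so that each $D_k$ has positive discriminant (take $c_k$ large) while its two real eigenvalues avoid the finitely many eigenvalues already produced. Then $D$ has $2m$ distinct real eigenvalues, is $\R$-similar to a real diagonal matrix, and so is a sum of $4$ skew-coninvolutory matrices by the diagonal case; altogether $1+4=5$. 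Each block $H_{2m}(\mu)$ is handled by splitting off the skew-coninvolutory matrix $\bigl[\begin{smallmatrix}0&I\\-I&0\end{smallmatrix}\bigr]$; the remaining corner block is consimilar to a real one and, by \eqref{nur}, is a sum of two real involutory matrices, each consimilar to $I_{2n}$, and $I_{2n}$ is a sum of two skew-coninvolutory matrices by \eqref{bfl} --- again $1+2+2=5$. This inductive eigenvalue-separation argument on $2\times 2$ blocks (together with the separate treatment of the $H$-blocks) is precisely the ingredient your outline is missing; without it, or a proof of your companion-block lemma of comparable substance, the proposal remains a plan rather than a proof.
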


\begin{proof}
Let us prove the theorem for  $A\in\C^{2m\times 2m}$. If $A=\bar S^{-1}BS$ and $B=C_1+\dots+C_k$ is a sum of skew-coninvolutory matrices, then $A=\bar S^{-1}C_1S+\dots+\bar S^{-1}C_kS$ is a sum of skew-coninvolutory matrices too.
Thus, it suffices to prove the theorem for any matrix that is consimilar to $A$.

By \cite[Theorem 4.6.12]{H-J}, each square complex matrix is consimilar to a direct sum, uniquely determined up to permutation of summands, of matrices of the following two types:
\begin{equation}\label{mmu}
J_n(\lambda ):=\begin{bmatrix}
  \lambda  & 1&&0 \\
&\lambda  & \ddots \\
&&\ddots&1\\
0&&&\lambda   \\
\end{bmatrix}\quad (n\text{-by-}n,\ \lambda \in\R,\ \lambda \ge 0)
\end{equation}
and
\begin{equation}\label{ccd}
H_{2m}(\mu):=\begin{bmatrix}
  0& I_n \\
J_n(\mu )&0\\
\end{bmatrix}\quad (\mu \in\C,\ \mu<0\text{ if }\mu \in\R).
\end{equation}
Thus, we suppose that $A$ is a direct sum of matrices of these types.
\medskip

\emph{Case 1: $A$ is diagonal.} Then $A$ is a sum of 4 skew-coninvolutory matrices since $A$ is a direct sum of $m$ real diagonal 2-by-2 matrices and each  real diagonal 2-by-2 matrix is represented in the form
\[
\begin{bmatrix}
       a & 0 \\
       0 & b \\
     \end{bmatrix}
=\begin{bmatrix}
       (a-b)/2 & 0 \\
       0 & -(a-b)/2 \\
     \end{bmatrix}+\begin{bmatrix}
       (a+b)/2 & 0 \\
       0 & (a+b)/2 \\
     \end{bmatrix}
\]
in which each summand is a sum of two skew-coninvolutory matrices because
\[
\begin{bmatrix}
       2c & 0 \\
       0 & -2c \\
     \end{bmatrix}
=\begin{bmatrix}
       c & -1 \\
       (1+c^2) & -c \\
     \end{bmatrix}+\begin{bmatrix}
       c & 1 \\
       -(1+c^2) & -c \\
     \end{bmatrix}
\] and
\begin{equation}\label{bfl}
\begin{bmatrix}
       2c & 0 \\
       0 & 2c \\
     \end{bmatrix}
=\begin{bmatrix}
       c & -i \\
       (1+c^2)i & c \\
     \end{bmatrix}+\begin{bmatrix}
       c & i \\
       -(1+c^2)i & c \\
     \end{bmatrix}
\end{equation}
are sums of two skew-coninvolutory matrices  for all $c\in\R$.
\medskip

\emph{Case 2: $A$ is a direct sum of matrices of type \eqref{mmu}.} Then it has the form
\begin{equation*}\label{saj}
A=\begin{bmatrix}
  \lambda_1  & \varepsilon _1&&0 \\
&\lambda_2  & \ddots \\
&&\ddots&\varepsilon _{2m-1}\\
0&&&\lambda_{2m}   \\
\end{bmatrix}
\end{equation*}
in which all $\lambda _i\ge 0$ and all $\varepsilon_i\in\{0,1\}$.

Represent $A$ in the form $A=C+D$, in which
\[
C:=\begin{bmatrix}
     c_1 & 1 \\
     -1+c_1^2 & -c_1 \\
   \end{bmatrix}\oplus\dots\oplus
 \begin{bmatrix}
     c_m & 1 \\
     -1+c_m^2 &- c_m \\
   \end{bmatrix},\quad \text{all }c_i\in\R,
\]
is a skew-coninvolutory matrix. Let us show that $c_1,\dots,c_m$ can be chosen such that all eigenvalues of $D$ are distinct real numbers.

The matrix $D$ is upper block-triangular with the diagonal blocks
\[
D_1:=\begin{bmatrix}
     \lambda _1-c_1 & \varepsilon _1-1 \\
     1-c_1^2 & \lambda _2+c_1 \\
   \end{bmatrix},\ \dots, \
D_m:= \begin{bmatrix}
     \lambda _{2m-1}-c_m &\varepsilon _{2m-1}- 1 \\
     1-c_m^2 &\lambda _{2m}+ c_m \\
   \end{bmatrix}.
\]
Hence, the the set of eigenvalues of $D$ is the union of the sets of eigenvalues of $D_1,\dots ,D_m$.

Let $c_1,\dots,c_{k-1}$ have been chosen such that the eigenvalues of $D_1,\dots,D_{k-1}$ are distinct real numbers $\nu _1,\dots,\nu _{2k-2}$. Depending on $\varepsilon _{2k-1}\in\{0,1\}$, the matrix $D_k$ is
\begin{equation}\label{gfe}
\begin{bmatrix}
     \lambda _{2k-1}-c_k & -1 \\
     1-c_k^2 & \lambda _{2k}+c_k \\
   \end{bmatrix}
   \quad\text{or}\quad
\begin{bmatrix}
     \lambda _{2k}-c_k & 0 \\
     1-c_k^2 & \lambda _{2k}+c_k \\
   \end{bmatrix}.
\end{equation}

\begin{itemize}
  \item
Let $D_k$ be the first matrix in \eqref{gfe}. Its characteristic polynomial is
\begin{align*}
 \chi _k(x)&=x^2-\trace(D_k)x+\det(D_k)\\&=
x^2-(\lambda _{2k-1}+\lambda _{2k})x+(  \lambda _{2k-1}-c_k)(\lambda _{2k}+c_k)+1-c_k^2.
\end{align*}
Its discriminant is
\begin{align*}
\Delta _k=&(\lambda _{2k-1}+\lambda _{2k})^2-4[\lambda _{2k-1}\lambda _{2k}+(\lambda _{2k-1}-\lambda _{2k})c_k-2c_k^2+1]\\
=&(\lambda _{2k-1}-\lambda _{2k})^2+4 (-\lambda _{2k-1}+\lambda _{2k})c_k+8c_k^2-4.
\end{align*}
For a sufficiently large $c_k$, $\Delta _k>0$ and so the roots of $\chi _k(x)$ are some distinct real numbers $\nu _{2k-1}$ and $\nu _{2k}$. Since \[\nu _{2k-1}+\nu _{2k}=\trace(D_k)=\lambda _{2k-1}+\lambda _{2k},\] we have
\begin{align*}
\det(D_k)&=\nu _{2k-1}\nu _{2k}=\nu _{2k-1}
(\lambda _{2k-1}+\lambda _{2k}-\nu _{2k-1})\\&=
(\lambda _{2k-1}+\lambda _{2k}-\nu _{2k})\nu _{2k}.
\end{align*}
Taking $c_k$ such that
\[
\det(D_k)\ne \nu _{i}
(\lambda _{2k-1}+\lambda _{2k}-\nu _{i})\quad\text{for all }i=1,\dots,2k-2,
\]
we get $\nu _{2k-1}$ and $\nu _{2k}$ that are not equal to $\nu _{1},\dots,\nu _{2k-2}$.

  \item
Let $D_k$ be the second matrix in \eqref{gfe}. Then its eigenvalues are  $\lambda _{2k}-c_k$ and $\lambda _{2k}+c_k$. We choose a nonzero real $c_k$ such that these eigenvalues are not equal to $\nu _{1},\dots,\nu _{2k-2}$.
\end{itemize}
We have constructed the real skew-coninvolutory matrix $C$ such that $A=C+D$, in which $D$ is a real matrix with distinct eigenvalues $\nu _{1},\dots,\nu _{2m}\in \R$. Since $D$ is $\R$-similar to a diagonal matrix and by Case 1, $D$ is a sum of 4 skew-coninvolutory matrices.
\medskip

\emph{Case 3: $A$ is a direct sum of matrices of types \eqref{mmu} and \eqref{ccd}.} Due to Case 2, it suffices to prove that each matrix $H_{2m}(\mu)$ is a sum of 5 skew-coninvolutory matrices. Write
\[
\begin{bmatrix}
  0& I_n \\
J_n(\mu )&0\\
\end{bmatrix}=\begin{bmatrix}
  0& I_n \\
-I_n&0\\
\end{bmatrix}+\begin{bmatrix}
  0& 0 \\
J_n(\mu )+I_n&0\\
\end{bmatrix}.
\]
The first summand is a skew-coninvolutory matrix, and so we need to proof that the second summand is a sum of 4 skew-coninvolutory matrices. By \eqref{jdr}, there exists a nonsingular $S$ such that $B:=\bar S^{-1}(J_n(\mu )+I_n)S$ is a real matrix. Then
the second summand is consimilar to a real matrix:
\[
\begin{bmatrix}
  \bar S^{-1}&0 \\
0&\bar S^{-1}\\
\end{bmatrix}
\begin{bmatrix}
  0& 0 \\
J_n(\mu )+I_n&0\\
\end{bmatrix}
\begin{bmatrix}
  S&0 \\
0&S\\
\end{bmatrix}=
\begin{bmatrix}
  0&0 \\
B&0\\
\end{bmatrix},
\]
which is the sum of two coninvolutory matrices:
\begin{equation}\label{nur}
\begin{bmatrix}
  0&0 \\
B&0\\
\end{bmatrix}=
\begin{bmatrix}
  I_n&0 \\
B&-I_n\\
\end{bmatrix}+
\begin{bmatrix}
  -I_n&0 \\
0&I_n\\
\end{bmatrix}.
\end{equation}

By  \cite[Lemma 4.6.9]{H-J}, each coninvolutory matrix is consimilar to the identity matrix. Hence, each summand in \eqref{nur} is consimilar to $I_{2n}$, which is a sum of two skew-coninvolutory matrices due to \eqref{bfl}. Thus, the matrix \eqref{nur} is a sum of 4 skew-coninvolutory matrices.
\end{proof}

\section*{Acknowledgments}
The work of V.V. Sergeichuk was done during his visit to the University of S\~ao Paulo supported by FAPESP, grant 2015/05864-9.


\begin{thebibliography}{99}

\bibitem{aba}
M.N.M. Abara, D.I. Merino, A.T. Paras, Skew-coninvolutory
matrices, Linear Algebra Appl. 426 (2007) 540--557.

\bibitem{H-J}
R.A. Horn, C.R. Johnson, Matrix Analysis, 2nd ed., Cambridge UP, Cambridge, 2013.

\bibitem{mer}
D.I. Merino, The sum of orthogonal matrices, Linear Algebra
Appl. 436 (2012) 1960--1968.

\bibitem{rab}
V. Rabanovich, Every matrix is a linear combination of three idempotents,
Linear Algebra Appl. 390 (2004) 137--143.

\bibitem{Spiegel} E. Spiegel, Sums of projections, Linear Algebra Appl. 187 (1993) 239--249.

\bibitem{van}
B.L. van der Waerden, Algebra, vol. II, Springer, 2003.

\bibitem{wu}
P.Y. Wu,
Sums of idempotent matrices, Linear Algebra Appl. 142 (1990) 43--54.

\bibitem{wu1}
P.Y. Wu, Additive combinations of special operators,
in: Functional Analysis and Operator Theory (Warsaw, 1992), Banach Center Publ., 30, Polish Acad. Sci., Warsaw, 1994, pp. 337--361.
\end{thebibliography}
\end{document}